\documentclass[11pt,reqno]{amsart}

\usepackage{amsthm,amsmath,amssymb,amsfonts,amscd,mathtools}
\RequirePackage{array}
\RequirePackage{cancel}
\RequirePackage{enumerate}
\RequirePackage{mathtools}
\RequirePackage{tikz-cd}
\usetikzlibrary{positioning}
\usepackage{dynkin-diagrams} 
\RequirePackage{geometry}
\RequirePackage{comment}
\usepackage[pagebackref = true,
            colorlinks = true,
            linkcolor = blue,
            urlcolor  = blue,
            citecolor = blue]{hyperref}
\usepackage{cleveref}
\renewcommand*{\backref}[1]{}
\renewcommand*{\backrefalt}[4]{%
  \ifcase #1
  \or Cited on page #2.
  \else Cited on pages #2.
  \fi
}

\allowdisplaybreaks

\newcommand{\R}{\mathbb R}

\renewcommand{\P}{\mathcal {P}}

\newcommand{\G}{\mathsf {G}}
\newcommand{\K}{\mathsf {K}}

\newcommand{\V}{\mathcal {V}}
\newcommand{\A}{\mathcal A}
\renewcommand{\H}{\mathcal H}
\renewcommand{\S}{\mathcal S}
\newcommand{\X}{\mathcal X}
\renewcommand{\L}{\mathcal L}

\renewcommand{\i}{\operatorname{i}}
\newcommand{\Id}{\operatorname{Id}}
\newcommand{\tr}{\operatorname{tr}}

\newcommand{\GL}{\operatorname{GL}}

\newcommand{\Herm}{{\operatorname{Herm}}}

\theoremstyle{plain}
\newtheorem{theorem}{Theorem}[section]
\newtheorem{proposition}[theorem]{Proposition}
\newtheorem{lemma}[theorem]{Lemma}
\newtheorem{corollary}[theorem]{Corollary}

\newtheorem{introtheorem}{Theorem}

\newtheorem{introconjecture}[introtheorem]{Conjecture}
\newtheorem{introcorollary}[introtheorem]{Corollary}

\theoremstyle{definition}

\theoremstyle{remark}
\newtheorem{remark}[theorem]{Remark}

\newtheorem{introassumptions}[introtheorem]{Assumptions}

\title{The Fino--Vezzoni conjecture on homogeneous spaces}

\author{Joseph Kwong}
\address[J. Kwong]{The University of Queensland, Brisbane, QLD 4072, Australia}
\email{j.kwong@student.uq.edu.au}

\begin{document}

\begin{abstract}
   We prove that a compact discrete quotient of a complex homogeneous space with compact isotropy is K\"ahler whenever it admits both a balanced metric and a pluriclosed metric. Moreover, if its real first Chern class vanishes, then the pluriclosed flow starting from any invariant pluriclosed metric exists for all time and converges smoothly to a flat K\"ahler metric.
\end{abstract}
\maketitle

\section{Introduction}
Let $Y$ be a complex manifold of complex dimension $n \geq 2$. A Hermitian metric $\omega$ on $Y$ is called \textit{balanced} if $d \omega^{n-1} = 0$, and \textit{pluriclosed} if $\partial \overline \partial \omega = 0$. Every
K\"ahler metric is automatically both balanced and pluriclosed. In particular, a K\"ahler manifold always admits balanced and pluriclosed metrics. The \textit{Fino--Vezzoni conjecture} \cite[Problem 3]{fino_2015} predicts the converse:

\begin{introconjecture}
    \label{first conj}
    If a compact complex manifold $Y$ admits both a balanced metric and a pluriclosed metric, then  $Y$ also admits a K\"ahler metric.
\end{introconjecture}
Note that a Hermitian metric which is simultaneously balanced and pluriclosed is necessarily K\"ahler \cite[Remark 1]{ivanov}.
Our main result verifies the first Fino--Vezzoni conjecture under the following homogeneity assumptions:
\begin{introassumptions}
    \label{assum}
    Let $Y$ be a compact complex manifold of the form $Y = \Gamma \backslash X$, where
    \begin{enumerate}
        \item $X$ is a connected complex manifold of complex dimension $n \geq 2$,
        \item \label{assum2}there exists a connected real Lie group $\G$ acting smoothly and transitively on $X$ via biholomorphisms, with compact isotropy subgroups, and 
        \item \label{assum3}$\Gamma \leq \G$ is a discrete subgroup acting freely and cocompactly on $X$.  
    \end{enumerate}
\end{introassumptions}
\begin{introtheorem}
    \label{thm main}
    Under Assumptions \ref{assum}, if $Y$ admits both a balanced metric and a pluriclosed metric, then $Y$ also admits a K\"ahler metric.
\end{introtheorem}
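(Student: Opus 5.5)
We will reduce to invariant metrics by averaging, and then prove the theorem at the level of Lie algebras. Write $X = \G/\K$ with $\K$ compact.

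\textbf{Step 1: averaging.} Since $\Gamma\backslash\G$ is compact, $\G$ is unimodular and $\Gamma\backslash \G$ carries a finite $\G$-invariant measure. Averaging a metric on $Y$ over $\Gamma\backslash\G$ (pull back to $X$, translate by $g\in\G$, integrate) produces a $\G$-invariant Hermitian metric on $X$, hence on $Y$. The pluriclosed condition $\partial\overline\partial\omega=0$ is linear in $\omega$, so it survives averaging. The balanced condition is linear in $\omega^{n-1}$, not in $\omega$. So we average the closed positive $(n-1,n-1)$-form $\omega^{n-1}$ instead, and use Michelsohn's observation that every positive $(n-1,n-1)$-form is $\tilde\omega^{n-1}$ for a unique Hermitian $\tilde\omega$. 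Thus we obtain $\G$-invariant metrics $\omega_b$ (balanced) and $\omega_p$ (pluriclosed). Likewise, an invariant Kähler metric on $X$ descends to $Y$. The problem therefore becomes algebraic: $\K$-invariant forms on $\mathfrak m=\mathfrak g/\mathfrak k$ with the Chevalley--Eilenberg-type differential.

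\textbf{Step 2: the pairing argument.} Exploit the standard pairing. Set $\int_Y \partial\overline\partial\omega_p\wedge\omega_b^{n-2}=0$ and integrate by parts. This yields
\[
\int_Y \omega_p\wedge \partial\overline\partial\,\omega_b^{n-2} = 0 .
\]
Now use $d\omega_b^{n-1}=0$ to rewrite $\partial\overline\partial\omega_b^{n-2}$ in terms of the torsion $(1,0)$-form and the Bismut/Chern torsion of $\omega_b$. For invariant metrics, all integrands are constants, so every integral identity becomes a pointwise algebraic identity on $\mathfrak m$.

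The goal is to show the integrand is a sum of nonnegative terms like $|T|^2_{\omega_p,\omega_b}$, forcing the torsion of $\omega_b$ (or of $\omega_p$) to vanish, i.e.\ Kähler. In the nilmanifold and solvmanifold literature, the analogous step uses that $d$ of invariant $(2n-1)$-forms vanishes by unimodularity. Here that holds because $\G$ is unimodular and $\K$ compact, so invariant top-degree exact forms integrate to zero.

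\textbf{Step 3: structure theory as a fallback.} If the positivity in Step 2 fails in general, we will use structure. Take a Levi decomposition $\mathfrak g=\mathfrak s\ltimes\mathfrak r$. The pluriclosed condition on invariant metrics should force strong restrictions, and so should the balanced condition. For example, balanced metrics on compact quotients require the semisimple part acting to be of compact type or of specific shape; otherwise $\omega^{n-1}$ would pair nontrivially with a nonzero invariant exact form. Combining the two conditions should force $\mathfrak g$ to be essentially a compact-by-abelian type. For such algebras the invariant complex structure is Kähler, or the balanced and pluriclosed cases reduce to the solvable setting, where the conjecture is known.

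\textbf{Main obstacle.} I expect the main obstacle to be Step 2/3: producing a sign-definite identity for two \emph{different} invariant metrics, since the Ivanov remark handles only a single metric that is both balanced and pluriclosed. I would first try to simultaneously diagonalize $\omega_p$ with respect to $\omega_b$ on $\mathfrak m^{1,0}$. Then I would expand $\partial\overline\partial\omega_p$ in structure constants, aiming to show $\omega_b$'s torsion is killed.
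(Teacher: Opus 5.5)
The averaging in Step 1 is correct and is exactly the paper's symmetrisation (Corollary~\ref{cor sym}), including averaging $\omega^{n-1}$ and invoking Michelsohn. The genuine gap is that Steps 2 and 3 contain no argument. Moreover, the target of Step 2 cannot be reached. You want a sign-definite identity that forces the torsion of $\omega_b$ or of $\omega_p$ to vanish, but in general neither given metric is K\"ahler.

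To see this, let $\omega_*$ be an invariant K\"ahler metric and take $0\neq\eta\in\A$. For small $\epsilon$, $\omega_*+\epsilon\eta$ is an invariant pluriclosed metric. It is not K\"ahler: its invariant Aeppli slice is that of $\omega_*$, which contains only one balanced metric by Theorem~\ref{thm tech 1}. Similarly, the slice of any invariant non-pluriclosed metric contains a balanced metric that is not K\"ahler. Both phenomena occur, e.g.\ for a suitable left-invariant K\"ahler structure on $\mathfrak{e}(2)\oplus\R^3$. So no identity built from $\omega_b$ and $\omega_p$ alone can work. In general the K\"ahler metric is a \emph{third} metric that you must construct.

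Step 3 does not fill the gap. Balanced metrics do not force a compact-type Levi factor: compact quotients of $\operatorname{SL}(2,\mathbb{C})$ carry invariant balanced metrics. Also, the solvable case of the conjecture is only known for special classes (nilpotent, two-step solvable, almost abelian, \dots). So the proposed reduction lands on a case that is not known.

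The missing idea is variational: move $\omega_p$ in a way that keeps it pluriclosed, and use $\omega_b$ only to control the motion. Let $\A=\{\overline\partial\alpha+\partial\overline\alpha:\alpha\in\Omega^{1,0}(X)^\G\}$. Every metric in the invariant Aeppli slice $\S=(\omega_p+\A)\cap\P$ is pluriclosed. For invariant $\omega$ and $\eta=(d\beta)^{1,1}\in\A$, Stokes gives
\[
d\V_\omega(\eta)=\frac{1}{(n-1)!}\int_Y\beta\wedge d\omega^{n-1}.
\]
By the perfect pairing of invariant forms (Lemma~\ref{pairing lemma}), an invariant metric is therefore balanced exactly when $d\V_\omega$ vanishes on $\A$. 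In particular, the balanced metrics in $\S$ are the critical points of $\V|_\S$.

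Applied at $\omega_b$, this says $\tr_{\omega_b}\eta=0$ for all $\eta\in\A$. Hence $\tr_{\omega_b}$ is constant on $\S$, and $\S$ is bounded. Since $\V$ vanishes on $\overline\S\setminus\S$, it has an interior maximum. That maximum is a balanced and pluriclosed, hence K\"ahler, metric; strict concavity of $\log\det$ even gives uniqueness.

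So the pairing you reach for in Step 2 is the right kind of ingredient, but it should pair $\omega_b^{n-1}$ against Aeppli-exact forms. It is used to make the slice compact, not to produce a positivity identity.
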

By specialising Theorem \ref{thm main} to the case when the isotropy  is trivial,  we have:
\begin{introcorollary}
    \label{main cor 1}
    Let $\G$ be a real Lie group, let $J$ be a left-invariant complex structure, and let $\Gamma$ be a cocompact lattice of $\G$. If the compact complex manifold $Y = (\Gamma \backslash \G, J)$ admits both a balanced metric and a pluriclosed metric, then $Y$  admits a flat K\"ahler metric. 
\end{introcorollary}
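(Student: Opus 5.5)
The corollary should follow from Theorem \ref{thm main} in two stages: first check that the hypotheses apply, then upgrade the Kähler metric it produces to a flat one.

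\textbf{Step 1: verifying Assumptions \ref{assum}.} Take $X = (\G, J)$ and let $\G$ act on itself by left multiplication. This action is transitive and the isotropy is trivial, hence compact. Since $J$ is left-invariant, the action is by biholomorphisms. The lattice $\Gamma$ acts freely and cocompactly on $\G$, and connectedness of $\G$ may be assumed by replacing $\G$ with its identity component $\G_0$ and $\Gamma$ with $\Gamma\cap \G_0$. Then $\Gamma \backslash \G$ is a finite disjoint union of quotients of the form $(\Gamma\cap\G_0)\backslash \G_0$, or we simply work componentwise. Theorem \ref{thm main} then shows that $Y$ admits a Kähler metric.

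\textbf{Step 2: upgrading to a flat Kähler metric.} I see two routes.

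The first route is to inspect the proof of Theorem \ref{thm main}. I expect it to proceed by averaging the balanced and pluriclosed metrics over $\G$, using compact isotropy and unimodularity, which holds because $\G$ admits a lattice. This should produce \emph{invariant} balanced and pluriclosed metrics, and then an invariant Kähler metric on the Lie algebra $(\mathfrak g, J)$.

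The second route uses a known structure result, independent of those details. A unimodular Lie algebra carrying a left-invariant Kähler structure is flat; this is the theorem of Hano and of Lichnerowicz--Medina that unimodular Kähler Lie groups are flat, and they are then of the form $\mathfrak a \ltimes \mathfrak b$ with $\mathfrak a$ acting by skew-adjoint operators. So once an invariant Kähler metric exists, flatness follows. If instead only a non-invariant Kähler metric is produced, I would symmetrise it by the standard averaging argument of Belgun/Fino--Grantcharov. One averages the Kähler form over the compact quotient against the $\G$-invariant volume to obtain a left-invariant $J$-positive $(1,1)$-form, and closedness is preserved because $d$ commutes with averaging. This gives an invariant Kähler metric, and the structure result applies.

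\textbf{Main obstacle.} The hard part is this averaging and structural step, namely ensuring the Kähler metric can be taken left-invariant and invoking flatness of unimodular Kähler Lie algebras. By contrast, reducing the corollary to Theorem \ref{thm main} in Step 1 is routine.
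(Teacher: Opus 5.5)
Your proposal is correct and follows the paper's route: Theorem \ref{thm main} with trivial isotropy gives a K\"ahler metric on $Y$. Symmetrisation (Corollary \ref{cor sym}) makes it left-invariant, and Hano's theorem for the unimodular group then gives flatness, which descends to $Y$. One small inaccuracy: when $\G$ is disconnected, the components of $\Gamma\backslash\G$ are quotients of $\G_0$ by the conjugates $g^{-1}(\Gamma\cap\G_0)g$, not all by $\Gamma\cap\G_0$, but your componentwise fallback already covers this.
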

The flatness assertion follows from Hano's theorem \cite{hano} and Corollary \ref{cor sym}.
Previous results on balanced and pluriclosed metrics on Lie groups already imply Corollary \ref{main cor 1} when
 $\G$ is nilpotent \cite{fino_2016, arroyo_2022}, 
 $\G$ is two-step solvable \cite{fusi2026, Fribert_2025}, 
 $Y$ is an Oeljeklaus--Toma manifold \cite{otiman},
 $\G$ is almost abelian \cite{Fino_2023},
 $\G$ has an abelian ideal of codimension two \cite{Zheng_2024},
 $\G$ is a particular low-dimensional solvable  Lie group \cite{fino_2025,fino_2015},
 $\G$ is compact \cite{Fribert_2025,fino_2019},
 $\G$ is semisimple and $J$ is regular \cite{kwong2026, giusti_podesta_2023, lauret},
or  the first Bott-Chern class of $Y$ vanishes
\cite{fino2026}. 

On the other hand, by specialising Theorem \ref{thm main} to the case when $\Gamma$ is trivial, we have:

\begin{introcorollary}
    \label{main cor 2}
    Let $Y$ be a complex manifold, and suppose there exists a compact Lie group acting smoothly and transitively on $Y$ via biholomorphisms.  If $Y$ admits both a balanced metric and a pluriclosed metric, then $Y$ also admits a K\"ahler metric. In particular, $Y$ is a product of a complex torus and a complex generalised flag manifold.
\end{introcorollary}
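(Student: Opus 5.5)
The first claim is the special case $\Gamma = \{1\}$ of Theorem \ref{thm main}. Let $\K$ be the compact Lie group acting transitively on $Y$ by biholomorphisms. Since $\K$ is compact, $Y = \K/\K_y$ is compact, and every isotropy subgroup $\K_y$ is closed in a compact group, hence compact. We may replace $\K$ by its identity component $\K^0$: it is still compact, and since $Y$ is connected it still acts transitively, because its orbits are open and partition $Y$. Then Assumptions \ref{assum} hold with $X = Y$, $\G = \K^0$ and $\Gamma$ trivial, which acts freely and, as $Y$ is compact, cocompactly. We need $n \geq 2$ for the assumptions; in dimension one every Hermitian metric is Kähler, so that case is trivial. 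Theorem \ref{thm main} then gives a Kähler metric on $Y$.

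For the structural statement, I will average a Kähler form $\omega$ over $\K$ with respect to Haar measure. The averaged form $\tilde\omega = \int_\K k^*\omega \, dk$ is closed, positive, and $J$-compatible, since each $k$ acts holomorphically. So $Y$ carries a $\K$-invariant Kähler metric, and $Y$ is a compact homogeneous Kähler manifold.

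Then I invoke the Borel--Remmert theorem (equivalently Matsushima's theorem): a compact homogeneous Kähler manifold is biholomorphic to the product of a complex torus and a rational homogeneous space $G_{\mathbb C}/P$, that is, a complex generalised flag manifold. Alternatively, one can use the classical theorem that a homogeneous space of a compact group with an invariant Kähler metric is a Kähler product of a flat torus and a generalised flag manifold.

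There is no real obstacle; the only points needing care are confirming the hypotheses of Theorem \ref{thm main} (connectedness of the acting group and compactness of the isotropy) and choosing which classification theorem to cite for the final product decomposition.
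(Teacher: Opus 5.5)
Your proposal is correct and follows the paper's route: apply Theorem~\ref{thm main} with $\Gamma$ trivial, then invoke the Borel--Remmert theorem for the product decomposition. Your extra checks (passing to the identity component, compactness of the isotropy, the trivial case $n=1$) supply details the paper leaves implicit; the averaging step that produces an invariant K\"ahler metric is harmless but unnecessary, since Borel--Remmert needs only a compact K\"ahler manifold on which biholomorphisms act transitively.
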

The last sentence of Corollary \ref{main cor 2} follows from the Borel--Remmert theorem \cite[Chapter 3.9]{Akhiezer}.
 Under the additional assumption that the fundamental group of $Y$ is finite,  Corollary \ref{main cor 2} has already been established in \cite{fino_2019, podesta}.

A complex manifold $X$ admits a Lie group action as in \eqref{assum2} if and only if $X$ admits a Hermitian metric whose holomorphic isometry group acts transitively. Note that there exist  complex manifolds whose automorphism group acts transitively, but on which no finite-dimensional Lie group acts transitively by biholomorphisms \cite{kaup}.

Without the homogeneity assumption,   Conjecture  \ref{first conj} also holds   for compact complex manifolds in Fujiki class $\mathcal C$ \cite{Chiose_2014}, twistor spaces \cite{verbitsky},  compact complex threefolds whose balanced cone maps surjectively onto the Gauduchon cone \cite{chiose2}, some examples of non-K\"ahler Calabi--Yau threefolds \cite{yau,teng,picard},  compact BTP threefolds \cite{chen}, compact Vaisman manifolds \cite{otiman2},  some examples of compact LCK manifolds \cite{ornea}, Strominger K\"ahler-like manifolds \cite{zheng2}, and  some complex manifolds constructed via suspensions \cite{suspense}.

By a standard symmetrisation argument (see Corollary \ref{cor sym}), Theorem \ref{thm main} is an immediate consequence of Theorem \ref{thm tech 1}. In order to state this theorem, let us introduce some terminology. First, the space of \textit{$\G$-invariant Aeppli directions} is the set
$$\A := \left\{ \overline \partial \alpha + \partial \overline \alpha : \text{$\alpha$ is a $\G$-invariant  $(1,0)$-form on $X$}\right\}.$$
Next, for  a $\G$-invariant Hermitian metric $\omega_0$ on $X$ (not necessarily pluriclosed), the \textit{$\G$-invariant Aeppli slice $\S$ of $\omega_0$} is the following subset of $\G$-invariant Hermitian metrics on $X$:
$$\S := \left\{ \omega \in (\omega_0 + \A) : \omega > 0\right\}.$$
If $\omega_0$ is pluriclosed, then every element of $\S$ is also pluriclosed.

\begin{introtheorem}
    \label{thm tech 1}
    Under Assumptions \ref{assum}, suppose $Y$ admits a balanced metric. For any $\G$-invariant Hermitian metric $\omega_0$ on $X$, its $\G$-invariant Aeppli slice contains  
   a unique balanced metric $\omega_*$. In particular, if $\omega_0$ is pluriclosed, then $\omega_*$ is K\"ahler.
\end{introtheorem}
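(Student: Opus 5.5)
The plan is to characterise balanced metrics in $\S$ as the critical points of a strictly concave function on $\S$, and to use the given balanced metric on $Y$ to show that $\S$ is bounded. Since everything is $\G$-invariant and $\G$ acts transitively, an invariant form is determined by its value at a base point $o\in X$. So $\A$ is a finite-dimensional real vector space, and $\S$ is an open convex subset of the affine space $\omega_0+\A$. On $\S$ I will use the functional
$$\mathcal F(\omega) := \log\frac{\omega^n}{\omega_0^n}\Big|_{o},$$
which is constant on $X$ by invariance. Equivalently, $\mathcal F(\omega)$ is $\log \mathrm{Vol}_\omega(Y) - \log\mathrm{Vol}_{\omega_0}(Y)$, since the ratio of volume forms is a constant. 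The map $\omega\mapsto\log\det\omega$ is strictly concave on positive Hermitian forms at $o$. A nonzero element of $\A$ is a nonzero form, so it is nonzero at $o$, and therefore $\mathcal F$ is strictly concave on $\S$. This gives uniqueness immediately once critical points are identified with balanced metrics.

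\emph{Step 1: critical points are exactly the balanced metrics.} For $\beta=\overline\partial\alpha+\partial\overline\alpha\in\A$ we have
$$\frac{d}{dt}\Big|_{t=0}\mathcal F(\omega+t\beta)\cdot \mathrm{Vol}_\omega(Y)=\int_Y n\,\omega^{n-1}\wedge\beta/\,n!\cdot(\text{const}).$$
Integrating by parts on the compact manifold $Y$ (all the forms descend to $Y$ because $\Gamma\le\G$), this equals, up to sign and constants, $\int_Y \overline\partial\omega^{n-1}\wedge\alpha+\int_Y\partial\omega^{n-1}\wedge\overline\alpha = 2\,\mathrm{Re}\int_Y\overline\partial\omega^{n-1}\wedge\alpha$. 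The form $\overline\partial\omega^{n-1}$ is a $\G$-invariant $(n-1,n)$-form, so there is a unique invariant $(1,0)$-form $\alpha$ with $\overline\partial\omega^{n-1}\wedge\alpha = |\overline\partial\omega^{n-1}|^2_\omega\,\omega^n/n!$. This $\alpha$ is obtained via the invariant Hodge star and conjugation, which commute with $\G$ because $\omega$ is invariant. Testing the derivative against this $\alpha$ shows that $d\mathcal F_\omega=0$ on $\A$ forces $\overline\partial\omega^{n-1}=0$, and by reality $d\omega^{n-1}=0$. The converse is immediate.

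\emph{Step 2: boundedness of $\S$ from the balanced metric on $Y$.} By the symmetrisation of Corollary~\ref{cor sym}, the balanced metric on $Y$ can be averaged to a $\G$-invariant balanced metric $\omega_b$ on $X$. For $\beta=\overline\partial\alpha+\partial\overline\alpha\in\A$, Stokes gives $\int_Y\omega_b^{n-1}\wedge\beta = \pm\int_Y d\omega_b^{n-1}\wedge(\alpha\pm\overline\alpha)=0$. Since $\omega_b^{n-1}\wedge\beta = \tfrac1n(\tr_{\omega_b}\beta)\,\omega_b^n$ and $\tr_{\omega_b}\beta$ is constant, this gives $\tr_{\omega_b}\beta=0$. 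Hence every $\omega\in\S$ satisfies $\tr_{\omega_b}\omega=\tr_{\omega_b}\omega_0=:C$. Because $\omega>0$, each eigenvalue of $\omega$ relative to $\omega_b$ at $o$ lies in $(0,C)$, so $\S$ is a bounded open convex subset of $\omega_0+\A$.

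\emph{Step 3: existence.} The function $\det(\omega/\omega_0)|_o$ is continuous on the compact closure $\overline\S$, positive on $\S$, and zero on $\partial\S$, where $\omega$ degenerates. It therefore attains its maximum at an interior point $\omega_*\in\S$. By Step 1, $\omega_*$ is balanced, and by strict concavity it is the unique critical point, hence the unique balanced metric in $\S$. If $\omega_0$ is pluriclosed, then $\omega_*$ is both pluriclosed and balanced, hence K\"ahler by \cite[Remark 1]{ivanov}.

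The step I expect to need the most care is Step 1. One has to justify that pairing against invariant test forms $\alpha$ suffices to detect $\overline\partial\omega^{n-1}$; the pointwise Hodge-star choice of $\alpha$ does this. One must also check that the integration by parts really takes place on the compact quotient $Y$ and does not need unimodularity of $\G$ separately; descending to $Y$ sidesteps that issue.
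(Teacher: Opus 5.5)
Your proposal is correct and is essentially the paper's proof: balanced metrics are the critical points of $\log$-volume on $\S$, and your $\G$-invariant Hodge-star test form plays the role of the paper's perfect invariant pairing (Lemma~\ref{pairing lemma}); the symmetrised balanced metric makes $\tr_{\omega_b}\omega$ constant for $\omega\in\S$, so $\S$ is bounded; and compactness of $\overline{\S}$ together with strict concavity of $\log\det$ gives existence and uniqueness, exactly as in Lemma~\ref{lem_cone}. The only slip is calling $\alpha$ ``unique'': for $n\ge 2$ the equation $\overline\partial\omega^{n-1}\wedge\alpha=|\overline\partial\omega^{n-1}|^2_\omega\,\omega^n/n!$ does not determine $\alpha$, but the canonical invariant choice $\alpha=*\,\overline{\gamma}$ with $\gamma=\overline\partial\omega^{n-1}$ is all you actually use.
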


We now turn to the pluriclosed flow. Let $Y$ be any complex manifold, and let $\omega_0$ be a pluriclosed metric on $Y$. A smooth family of Hermitian metrics $\{\omega(t)\}_{t \in [0, T)}$ is called \textit{the pluriclosed flow starting at $\omega_0$} if 
$$\frac{\partial}{\partial t} \omega(t) = - (\rho^B_{\omega(t)})^{1,1}, \qquad \omega(0) = \omega_0,$$
 where $(\rho^B_{\omega(t)})^{1,1}$ denotes the $(1,1)$-part of the Bismut--Ricci form of $\omega(t)$.  
 
The Streets--Tian maximal existence time conjecture \cite[Conjecture 3.6]{streets_survey} predicts, in particular, that every pluriclosed flow on a compact complex manifold $Y$ with vanishing de Rham first Chern class $c_1(Y) = 0$ is immortal. Conjecture 1.2  in \cite{fino2026} asserts that if $Y$ also admits a balanced metric, then every pluriclosed flow must have a K\"ahler limit:
 \begin{introconjecture}
    \label{second conj}
     Let $Y$ be any compact complex manifold admitting a balanced metric with $c_1(Y) = 0$. Then the pluriclosed flow starting at any pluriclosed metric is immortal and converges smoothly to a K\"ahler metric.
 \end{introconjecture}

As evidence, Fino and Vezzoni prove Conjecture \ref{second conj} under the additional assumptions that $Y = (\Gamma \backslash \G, J)$, the first Bott--Chern class of $Y$ vanishes, and the pluriclosed flow is invariant \cite[Theorem 1.3]{fino2026}. Our next result generalises \cite[Theorem 1.3]{fino2026} to all $Y$ satisfying Assumptions \ref{assum} and drops the vanishing first Bott--Chern class assumption:

\begin{introtheorem}
    \label{thm tech 2}
    Under Assumptions \ref{assum}, suppose $Y$ admits a balanced metric and $c_1(Y) = 0$. For any $\G$-invariant pluriclosed metric $\omega_0$ on $X$, the pluriclosed flow starting at $\omega_0$ is immortal, remains in the $\G$-invariant Aeppli slice $\S$ of $\omega_0$, and converges smoothly to the unique K\"ahler metric $\omega_*$ in $\S$. Moreover, $\omega_*$ is flat. 
\end{introtheorem}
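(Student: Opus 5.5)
The plan is to reduce the flow to a finite-dimensional ODE on the $\G$-invariant Aeppli slice $\S$, and then to use Theorem \ref{thm tech 1} together with $c_1(Y)=0$ to get a Lyapunov-type argument.

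\textbf{Step 1: the flow stays in $\S$.} Since $\omega_0$ is $\G$-invariant and $\G$ acts by biholomorphisms, uniqueness of the pluriclosed flow (on the compact quotient $Y$, or directly on the space of invariant tensors) forces $\omega(t)$ to be $\G$-invariant. The flow then becomes an ODE on the finite-dimensional space of invariant Hermitian forms. Recall the standard identity
\[
(\rho^B_\omega)^{1,1} = \rho^C_\omega + \overline\partial\partial^*\omega + \partial\overline\partial^*\omega
\]
up to sign conventions, where $\rho^C_\omega$ is the Chern--Ricci form. The terms $\overline\partial\partial^*\omega+\partial\overline\partial^*\omega$ have the form $\overline\partial\alpha+\partial\overline\alpha$ with $\alpha$ an invariant $(1,0)$-form, so they lie in $\A$. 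Next I would show that $\rho^C_\omega$ also lies in $\A$ for invariant $\omega$. Two invariant metrics have Chern--Ricci forms differing by $\i\partial\overline\partial\log(\omega^n/\omega'^n)$, and this vanishes because the ratio of invariant volume forms is constant. Hence $\rho^C_\omega=:\rho$ is independent of $\omega$ and is an invariant closed real $(1,1)$-form representing $c_1(Y)$ on $Y$. The claim that $\rho\in\A$ uses $c_1(Y)=0$ and the balanced metric. Averaging over the compact quotient (the symmetrisation of Corollary \ref{cor sym}) shows that an invariant exact form is $d$ of an invariant $1$-form, so $\rho=d\beta$ with $\beta$ invariant. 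Writing $\beta=\alpha+\overline\alpha$, the $(1,1)$-part gives $\rho=\overline\partial\alpha+\partial\overline\alpha\in\A$. Thus $\partial_t\omega\in\A$ and $\omega(t)\in\S$ while it stays positive.

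\textbf{Step 2: a monotone functional.} Fix the balanced invariant metric $\omega_*\in\S$ from Theorem \ref{thm tech 1}. I would consider $F(\omega)=\int_Y\omega\wedge\omega_*^{n-1}$, or better a volume-type functional. For $\omega\in\S$, the integral $\int\omega\wedge\omega_*^{n-1}$ is constant, because elements of $\A$ pair to zero against the closed form $\omega_*^{n-1}$. A more useful candidate is $\log\int_Y\omega^n$ or $\int\log(\omega^n/\omega_*^n)$. I expect the natural choice to be the Streets-type functional
\[
\Phi(\omega)=\log\frac{\omega^n}{\omega_*^n},
\]
which is a constant since both forms are invariant. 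Along the flow, $\frac{d}{dt}\Phi=-\tr_\omega(\rho^B)^{1,1}$. Using invariance and the fact that the trace of the $\partial\overline\partial^*$-terms is a divergence, this should equal $|\text{torsion}|^2$-type nonnegative quantities. The aim is to show that $\Phi$ is bounded above on $\S$ (by a concavity argument comparing with $\omega_*$) and monotone, with derivative vanishing only at Kähler points.

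\textbf{Step 3: long-time existence and convergence (main obstacle).} The difficulty is to rule out escape to the boundary of the cone $\S$, that is, degeneration or blow-up, in finite or infinite time. My first attempt would use the fixed balanced $\omega_*$. Since $\int_Y\omega\wedge\omega_*^{n-1}$ is constant on $\S$, this gives a uniform upper bound on $\omega(t)$. A lower bound would come from the monotone volume functional of Step 2, via determinant bounds. Together these give compactness of the trajectory in the open cone. The ODE then exists for all time, and by a LaSalle argument its $\omega$-limit set consists of stationary points of $\Phi$. These are pluriclosed metrics in $\S$ with vanishing torsion, hence Kähler, hence balanced. By the uniqueness in Theorem \ref{thm tech 1} the limit is $\omega_*$, and convergence is smooth since everything is invariant and finite-dimensional. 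Finally, for flatness: at the limit the flow is stationary, so $\rho^C_{\omega_*}=0$ and $\omega_*$ is invariant Ricci-flat Kähler. Hano's theorem, or the fact that homogeneous Ricci-flat metrics are flat (Alekseevsky--Kimelfeld), gives flatness.
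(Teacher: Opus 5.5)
The overall plan matches the paper. You reduce the flow to an ODE on $\S$ and use $\Phi=\log(\omega^n/\omega_*^n)$ as a Lyapunov function. You bound $\S$ above via the constancy of $\int_Y\omega\wedge\omega_*^{n-1}$, which is the paper's trace bound, and finish with compact superlevel sets, LaSalle, uniqueness of the Kähler metric in $\S$, and Alekseevskii--Kimelfeld. Your Step 1 is a valid alternative to the paper's direct symmetrisation of a primitive of $\rho^B_\omega$: you split $(\rho^B_\omega)^{1,1}$ into $\rho^C_\omega$ plus Aeppli terms, and use that $\rho^C_\omega$ is the same for all invariant metrics. The Aeppli terms should read $\partial\partial^*\omega+\overline\partial\overline\partial^*\omega$; the ones you wrote have types $(0,2)$ and $(2,0)$.

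The gap is in Step 2, which is the heart of the argument. Your reason for $\frac{d}{dt}\Phi\geq 0$ is wrong. A $\G$-invariant function that is a divergence is constant with zero integral over $Y$, so it vanishes and cannot produce a positive torsion term. The $|T|^2$ comes from the quadratic torsion part of the trace, as in the evolution formula the paper takes from Streets:
\[
\Big(\frac{\partial}{\partial t}-\Delta\Big)\log\frac{\omega^n}{h^n}=|T_\omega|_\omega^2-\tr_\omega\rho^C_h .
\]
More seriously, you never handle the Chern--Ricci contribution $-\tr_\omega\rho^C_\omega$ in $-\tr_\omega(\rho^B_\omega)^{1,1}$. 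Without knowing it vanishes, the sign of $\frac{d}{dt}\Phi$ is undetermined. You only obtain $\rho^C_{\omega_*}=0$ at the end, from convergence of the flow. But convergence rests on the monotonicity, so as written the argument is circular.

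The repair is available from your own Step 1. The form $\rho:=\rho^C_\omega$ is independent of $\omega$, closed, and lies in $\A$. So $\omega_*+t\rho$ is a Kähler metric in $\S$ for small $|t|$, and uniqueness of the Kähler metric in $\S$ forces $\rho=0$. The paper instead notes that the flow from $\omega_*$ stays Kähler and stays in $\S$, hence is constant. So $\omega_*$ is Ricci-flat and then flat, and the paper proves this \emph{before} the Lyapunov argument. Taking $h=\omega_*$ in the formula above, the Laplacian term vanishes by homogeneity. This gives $\frac{d}{dt}\Phi=|T_\omega|^2_\omega\geq 0$, with equality only at Kähler points, i.e.\ only at $\omega_*$. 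Your Step 3 then goes through as in the paper.
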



\subsection{Article outline and proof strategy}
In Section \ref{sec 2}, we prove Theorem \ref{thm tech 1}.  The key observation is that a $\G$-invariant Hermitian metric is balanced  precisely when it is a critical point of the volume functional $\V$ on its $\G$-invariant Aeppli slice  (Proposition \ref{prop_balanced_characterisation}). After symmetrising  a balanced metric on $Y$, we obtain a $\G$-invariant balanced metric on $X$, which forces each $\G$-invariant Aeppli slice to be bounded. Strict concavity of $\log \V$ supplies a unique maximiser $\omega_*$ on each slice (Lemma \ref{lem_cone}).

In Section \ref{sec 3}, we prove Theorem \ref{thm tech 2}. When $c_1(Y) = 0$, the pluriclosed flow restricts to an ODE on each $\G$-invariant Aeppli slice containing pluriclosed metrics. We show that $\L = \log \V$ is a proper strict Lyapunov function for this ODE, whose unique equilibrium point is the K\"ahler metric $\omega_*$ (Lemma \ref{lem ode}).

Finally, in Appendix \ref{sec A}, we construct the symmetrisation operator (Proposition \ref{prop sym operator}) and deduce Theorem \ref{thm main} from Theorem \ref{thm tech 1}.

\subsection{Acknowledgements} I thank James Stanfield, Elia Fusi, Anna Fino,  Luigi Vezzoni, Ramiro Lafuente, and Kyle Broder for reading  earlier versions of this manuscript and giving very helpful comments.
I also thank my supervisors Ramiro Lafuente and Kyle Broder for their continued encouragement and advice. 
I was supported by an Australian Government Research Training Program Scholarship.

\subsection{On the use of AI} The core idea of   Theorem \ref{thm tech 1} arose in discussions with  ChatGPT 5.6 Sol on the 25th of July, 2026.  This manuscript
does not contain AI-written text.

\section{Proof of Theorem \ref{thm tech 1}}
\label{sec 2}
Under Assumptions \ref{assum},  the real vector space of  $\G$-invariant real $(1,1)$-forms $$\H := \Omega_\R^{1,1}(X)^\G$$ on $X$ is finite-dimensional because the action of $\G$ on $X$ is transitive. Let $\P \subseteq \H$ denote the cone of $\G$-invariant Hermitian metrics on $X$, i.e.,
$$\P = \left\{  \omega \in \H : \omega > 0\right\}.$$
Henceforth, we identify $\G$-invariant forms on $X$ with their descents to $Y$ along the holomorphic covering map $X \rightarrow Y := \Gamma \backslash X$.
Let $\V: \H\rightarrow \R$ denote the volume functional
$$\V(\omega) := \frac{1}{n!} \int_Y \omega^n.$$
Let $\A \subseteq \H$ denote the subspace of \textit{$\G$-invariant Aeppli directions}
$$\A := \left\{ \overline \partial \alpha + \partial \overline \alpha: \alpha \in \Omega^{1,0}(X)^\G\right\} = \left\{(d \beta)^{1,1} : \beta \in \Omega^1_\R(X)^\G \right\}.$$
Here, $\Omega^{1,0}(X)^\G$ and $\Omega^1_\R(X)^\G$ denote the spaces of $\G$-invariant complex $(1,0)$-forms and real  $1$-forms on $X$, respectively.  For a $\G$-invariant Hermitian metric $\omega$ on $X$, its \textit{$\G$-invariant Aeppli slice} $\S$ is 
$$\S := (\omega + \A) \cap \P.$$

\begin{proposition}
    \label{prop_balanced_characterisation}
    A $\G$-invariant Hermitian metric $\omega$ on $X$ is balanced if and only if 
    $$d \V_\omega(\eta) = 0 \qquad \forall \eta \in \A,$$
    i.e., $\omega$ is a critical point of the restriction of $\V$ to the $\G$-invariant Aeppli slice of $\omega$.
\end{proposition}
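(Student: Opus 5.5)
Compute the first variation of $\V$ and integrate by parts on the compact manifold $Y$, using invariance to pass from "orthogonal to all invariant directions" to "vanishing."

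\textbf{Step 1: the first variation.} For $\eta\in\H$,
$$d\V_\omega(\eta)=\frac{1}{(n-1)!}\int_Y \eta\wedge\omega^{n-1}.$$
Write $\eta=(d\beta)^{1,1}$ with $\beta\in\Omega^1_\R(X)^\G$. Since $\omega^{n-1}$ has type $(n-1,n-1)$, only the $(1,1)$-part of $d\beta$ contributes to the top-degree integrand. Hence
$$d\V_\omega(\eta)=\frac{1}{(n-1)!}\int_Y d\beta\wedge\omega^{n-1}=\frac{1}{(n-1)!}\int_Y \beta\wedge d\omega^{n-1},$$
by Stokes' theorem on the compact manifold $Y$, up to a sign. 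Here $\beta$ and $\omega$ are $\G$-invariant, so in particular $\Gamma$-invariant, and descend to $Y$.

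\textbf{Step 2: balanced implies critical.} If $\omega$ is balanced, then $d\omega^{n-1}=0$, and Step 1 gives $d\V_\omega(\eta)=0$ for every $\eta\in\A$.

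\textbf{Step 3: critical implies balanced.} Suppose $\int_Y\beta\wedge d\omega^{n-1}=0$ for all $\beta\in\Omega^1_\R(X)^\G$. Let $\theta:=*_\omega d\omega^{n-1}$, a real $1$-form. It is $\G$-invariant because $\omega$ is invariant and $\G$ acts by biholomorphisms preserving $\omega$, so $\G$ commutes with $*_\omega$ and $d$. Taking $\beta=\theta$ gives
$$\int_Y\theta\wedge d\omega^{n-1}=\pm\int_Y |d\omega^{n-1}|^2_\omega\,dV_\omega.$$
This is also a consequence of the pointwise identity $\alpha\wedge *\gamma=\langle\alpha,\gamma\rangle\,dV$, after arranging the signs of $*$ so that the integrand is a norm squared. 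The pairing therefore vanishes only if $d\omega^{n-1}=0$.

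The main point to check is this invariance of the test form: the test direction $\theta$ built from $d\omega^{n-1}$ must lie in $\Omega^1_\R(X)^\G$, so that its $(1,1)$-part of $d\theta$ lies in $\A$. This holds because $\G$ acts by holomorphic isometries of $\omega$, and $*_\omega$ depends only on the metric and orientation, both of which are preserved. The remaining bookkeeping is the sign conventions for the Hodge star and the orientation. An alternative route is to use $\alpha\in\Omega^{1,0}$ and note that $d\omega^{n-1}=0$ iff $\overline\partial\omega^{n-1}=0$, by reality. Since $\omega^{n-1}$ is real, $\partial\omega^{n-1}=\overline{\overline\partial\omega^{n-1}}$, so the two formulations are equivalent.
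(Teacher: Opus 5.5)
Your proposal is correct and follows the paper's argument: the first-variation formula $d\V_\omega(\eta)=\frac{1}{(n-1)!}\int_Y \eta\wedge\omega^{n-1}$, Stokes' theorem to reduce to $\int_Y \beta\wedge d\omega^{n-1}$, and a Hodge-star argument for nondegeneracy. The only difference is in the last step. The paper invokes a perfect-pairing lemma built from an averaged $\G$-invariant Riemannian metric, whereas you test directly against $\theta=*_\omega d\omega^{n-1}$ using the already-invariant metric $\omega$. This slightly streamlines the same idea, avoiding the averaging and the dimension count.
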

\begin{proof}
    The differential  $d \V_\omega:  \H \rightarrow \R$ is given by
    $$d \V_\omega(\eta) = \frac{d}{dt}\bigg|_{t = 0} \V(\omega + t \eta)= \frac{1}{n!}\int_Y\frac{d}{dt}\bigg|_{t = 0} (\omega + t \eta)^n  = \frac{1}{(n-1)!} \int_Y \eta \wedge \omega^{n-1}.$$
    Now, suppose $\beta \in \Omega^1_\R(X)^\G$, and consider $\eta = (d \beta)^{1,1} \in \A$. We find 
    $$(n-1)! \,d \V_\omega(\eta) = \int_Y (d\beta)^{1,1} \wedge \omega^{n-1} = \int_Y d \beta \wedge \omega^{n-1} =  \int_Y  \beta \wedge d\omega^{n-1},$$
    where  the second equality holds because $\omega^{n-1}$ is an $(n-1,n-1)$-form, and the last equality holds by Stokes' theorem. 

    Clearly, $d \omega^{n-1} = 0$ implies $d \V_\omega(\A) = 0$. The converse holds because the pairing $$\Omega^1_\R(X)^\G \times \Omega^{2n-1}_\R(X)^\G \rightarrow \R, \qquad (\beta, \gamma) \mapsto \int_Y \beta \wedge \gamma$$
    is nondegenerate by Lemma \ref{pairing lemma}.
\end{proof}

\begin{remark}
    \label{rem gen}
    The proof above generalises to any compact complex manifold,  with no symmetry assumptions. In fact, a Hermitian metric $\omega$ on any compact complex manifold $Y$ is balanced if and only if it is a critical point of the volume functional $\V: \Omega_\R^{1,1}(Y) \rightarrow \R$ restricted to the full Aeppli slice of $\omega$ (cf. \cite[Proof of Corollary 3.7]{popovici}). 
\end{remark}

Now, choose:
\begin{itemize}
    \item A  background $\G$-invariant Hermitian metric $\omega_B$ on $X$.
    \item A base point $o \in X$.
\end{itemize}
Let $V$ denote the Hermitian inner product space  $(T^{1,0}_o X, \langle \cdot,\cdot \rangle)$, where  $\langle v,w \rangle := - \i \omega_{B}(v, \overline w)$ for all $v,w \in T^{1,0}_o X$.
Let $\K$ denote the compact isotropy subgroup of $\G$ at $o$, and 
let $\Herm(V)^\K$ denote the set of Hermitian operators on $V$ which commute with the isotropy representation $\K \rightarrow \GL(V)$. We have a real-linear isomorphism 
$$ \H \longleftrightarrow \Herm(V)^\K, \qquad \omega \mapsto A_\omega,$$
where $A_\omega$ is defined implicitly by $\langle A_\omega v,w\rangle= - \i \omega_o(v, \overline w)$ for $v,w \in V$. Since $\omega$ is $\G$-invariant and $X$ is $\G$-homogeneous, we can write $\omega^n = \det(A_\omega) \omega_B^n$. In particular,
$$\V(\omega) = \det(A_\omega) \,\V(\omega_B).$$
Once the data $(\omega_B,o)$ is fixed, we identify 
$$\H \cong \Herm(V)^\K \leq \Herm(V)$$
via the isomorphism above.  In particular, we set $\det, \tr: \H \rightarrow \R$ by
$\det(\omega) := \det(A_\omega)$ and $\tr(\omega) := \tr(A_{\omega})$, respectively. Since $\V|_\H = \det|_\H$   up to a constant, Proposition \ref{prop_balanced_characterisation} immediately implies the following:
\begin{corollary}
    \label{cor det}
    With the data $(\omega_B,o)$ fixed, a $\G$-invariant Hermitian metric $\omega$ on $X$ is balanced if and only if 
    $$(d \det)_\omega(\eta) = 0 \qquad \forall \eta \in \A.$$
    In particular, the background metric $\omega_B$ is balanced if and only if $\tr(\eta) = 0$ for all $\eta \in \A$.
\end{corollary}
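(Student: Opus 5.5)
The first assertion follows from Proposition \ref{prop_balanced_characterisation} once the differential of $\V$ is rewritten in terms of $\det$. The second assertion comes from computing the derivative of $\det$ at the identity.

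For the first claim, recall that under the fixed data $(\omega_B,o)$ we have the identity $\V(\omega) = \det(A_\omega)\,\V(\omega_B)$ on $\H$. Here $\V(\omega_B) > 0$ is a constant. The functional $\V$ is a polynomial on the finite-dimensional space $\H$, so differentiating at $\omega \in \P$ in the direction $\eta \in \H$ gives
$$d\V_\omega(\eta) = \V(\omega_B)\,(d\det)_\omega(\eta).$$
Since $\V(\omega_B)\neq 0$, $d\V_\omega$ vanishes on $\A$ exactly when $(d\det)_\omega$ vanishes on $\A$. Proposition \ref{prop_balanced_characterisation} then gives the equivalence with $\omega$ being balanced. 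One point to note is that $\det$ on $\H$ means $\det \circ A$. Because $\omega \mapsto A_\omega$ is real-linear, the chain rule gives $(d\det)_\omega(\eta) = (d\det)_{A_\omega}(A_\eta)$, where the right-hand side is the derivative of the ordinary determinant on $\Herm(V)$.

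For the second claim, I specialise to $\omega = \omega_B$. By the defining relation $\langle A_{\omega_B} v, w\rangle = -\i\,\omega_B(v,\overline w) = \langle v,w\rangle$, we get $A_{\omega_B} = \Id_V$. Jacobi's formula says the derivative of the determinant at the identity is the trace: $\frac{d}{dt}\big|_{t=0}\det(\Id + tA_\eta) = \tr(A_\eta)$. This is immediate from expanding $\det(\Id+tA)$ in powers of $t$. Hence $(d\det)_{\omega_B}(\eta) = \tr(\eta)$, and the first part shows that $\omega_B$ is balanced if and only if $\tr(\eta)=0$ for all $\eta\in\A$.

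There is no real obstacle here. The only care needed is to keep track of the identifications, namely that $A_{\omega_B}=\Id$ and that differentiating $\det$ on $\H$ agrees with differentiating the determinant on $\Herm(V)$ restricted to the subspace $\Herm(V)^\K$. Both follow from the linearity of the isomorphism $\H \cong \Herm(V)^\K$.
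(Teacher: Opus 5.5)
Your proposal is correct and follows the paper's own argument. The first claim comes from Proposition \ref{prop_balanced_characterisation} via $\V|_\H = \V(\omega_B)\det|_\H$, and the second from $A_{\omega_B} = \Id$ together with $(d\det)_{\Id} = \tr$.
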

The last sentence of Corollary \ref{cor det} follows because the  differential of $\det:\H \rightarrow \R$ at the background metric $\omega_B = \Id$ is  $\tr:\H \rightarrow \R$.

\begin{lemma}
    \label{lem_cone}
    Let $V$ be any finite-dimensional complex inner product space. Let $\H \leq \Herm(V)$ be a real subspace, and let
    $\P \subseteq \H$ be the set of positive-definite elements in $\H$.
    Let $\A \leq \H$ be a real subspace. Fix $A_0 \in \P$, and consider the slice
    $$\S := (A_0 + \A) \cap \P.$$
    Suppose $\S$ is bounded in $\H$. Then $\det|_\S:\S \rightarrow \R$ has a unique critical point $A_* \in \S$.
\end{lemma}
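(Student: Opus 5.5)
Since $\det$ is positive on $\P$, the critical points of $\det|_\S$ are exactly those of $\L := \log\det|_\S$. The plan is therefore to show three things in turn: that $\L$ is strictly concave on the convex set $\S$, that it attains a maximum in the relative interior, and that strict concavity makes this maximiser the only critical point.

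\emph{Concavity.} For $A\in\P$ and $\eta\in\H$ we have
$$\frac{d^2}{dt^2}\Big|_{t=0}\log\det(A+t\eta) = -\tr\big((A^{-1}\eta)^2\big) = -\big\|A^{-1/2}\eta A^{-1/2}\big\|^2 ,$$
which is strictly negative whenever $\eta\neq 0$. The set $\S$ is convex, being the intersection of an affine subspace with the convex cone $\P$. It is also relatively open in $A_0+\A$, since $\P$ is open in $\H$. Hence $\L$ is strictly concave on $\S$.

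\emph{Existence.} Because $\S$ is bounded, its closure $\overline\S$ in $A_0+\A$ is compact. It is contained in $(A_0+\A)\cap\overline\P$, where $\overline\P$ consists of the positive semidefinite elements. The function $\det$ is continuous on $\overline\S$, so it attains a maximum there. It vanishes on $\overline\S\setminus\S$, because those points are singular semidefinite operators. On the other hand $\det(A_0)>0$ with $A_0\in\S$. So the maximum is attained at some $A_*\in\S$. Since $\S$ is open in $A_0+\A$, this $A_*$ is a critical point of $\det|_\S$.

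\emph{Uniqueness.} Suppose $A_1\neq A_2$ are both critical points in $\S$. Set $f(t)=\L(A_1+t(A_2-A_1))$ for $t\in[0,1]$; this segment lies in $\S$ by convexity, and $A_2-A_1\in\A$. Then $f$ is strictly concave, so $f'$ is strictly decreasing. But criticality gives $f'(0)=0=f'(1)$, a contradiction. Therefore $A_*$ is the unique critical point.

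The only step requiring care is the boundary behaviour in the existence argument. One must check that limit points of $\S$ lying outside $\S$ are degenerate, so that $\det$ is zero there. This holds because such a point lies in $\overline\P\cap\H$ but not in $\P$, hence is positive semidefinite and not definite, hence has determinant zero. Boundedness of $\S$ is exactly what ensures the maximum is attained rather than escaping to infinity.
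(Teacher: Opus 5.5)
Your proposal is correct and follows essentially the same route as the paper. Existence comes from compactness of $\overline{\S}$ together with the vanishing of $\det$ on $\overline{\S}\setminus\S$, and uniqueness comes from strict concavity of $\log\det$ along the segment joining two putative critical points. Rewriting the second derivative as $-\|A^{-1/2}\eta A^{-1/2}\|^2$ simply makes explicit the strict negativity that the paper asserts directly.
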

\begin{proof}[Proof of Lemma \ref{lem_cone}]
    Since $\S$ is bounded, its closure $\overline \S$ in $\H$ is compact, so $\det|_{\overline \S}: \overline \S \rightarrow \R$ attains a maximum at some $A_* \in \overline \S$. Every element of the boundary $\overline \S \backslash \S$ has determinant zero, while elements of $\S$ have positive determinant. Therefore, $A_*$ must lie in $\S$. In particular, $A_*$ is a critical point of $\det|_\S$.

    Let us show uniqueness of this critical point. First, the critical points of $\det|_\S$ are precisely the critical points of $\log \det|_\S$. For the sake of contradiction, suppose $A_{**} \in \S$ is another critical point, and set $D := A_{**} - A_* \neq 0$. Because $\S$ is convex, there exists an open interval $I$ containing $0$ and $1$ such that $A(t) := A_* + t D \in \S$ for any $t \in I$. Now, consider $f:I \rightarrow \R$ given by $f(t) := \log \det(A(t))$. Since $A_*$ and $A_{**}$ are critical points of $\log \det|_\S$, we have  $0 = f'(0) = f'(1)$. On the other hand, we find
    $$f''(t) = - \tr(A(t)^{-1} D A(t)^{-1} D) < 0.$$
    Thus, $f'$ is strictly decreasing, a contradiction.
\end{proof}

\begin{proof}[Proof of Theorem \ref{thm tech 1}]
Under Assumptions \ref{assum}, suppose $Y$ admits a balanced metric. By symmetrisation, $X$ admits a $\G$-invariant balanced metric $\omega_B$, which we use as the background metric.  We also choose a base point $o \in X$. 

Now, fix a $\G$-invariant Hermitian metric $\omega_0$ on $X$, and consider its $\G$-invariant Aeppli slice
$\S :=  (\omega_0 + \A) \cap \P.$
By Corollary \ref{cor det}, the balanced metrics in $\S$ are precisely the critical points of $\det|_{\mathcal S}: \mathcal S \rightarrow \R$. Since the background metric $\omega_B$ is balanced, Corollary \ref{cor det} also tells us that $\tr(\eta) = 0$ for all $\eta \in \A$. Therefore, if $\omega = \omega_0 + \eta \in \mathcal S$, then 
    $$\|A_\omega\|_{\operatorname{op}} = \lambda_{\max}(A_\omega)  \leq \tr(\omega) = \tr(\omega_0),$$
    where $\|\cdot\|_{\operatorname{op}}$ denotes the operator norm and $\lambda_{\max}$ denotes the largest eigenvalue.
    Thus, $\mathcal S$ is bounded, so Lemma \ref{lem_cone} supplies a unique critical point $\omega_*$ of $\det|_{\mathcal S}$. Corollary \ref{cor det} implies that $\omega_*$ is the unique balanced metric in $\S$. 

    Finally, for the last sentence of Theorem \ref{thm tech 1}, suppose $\omega_0$ is pluriclosed.  Because $\partial \overline \partial \eta = 0$ for all $\eta \in \A$, it follows that every metric in $\mathcal S$ is also pluriclosed. Therefore,  $\omega_*$ is also K\"ahler by \cite[Remark 1]{ivanov}.
\end{proof}

\section{Proof of Theorem \ref{thm tech 2}}
\label{sec 3}
Under Assumptions \ref{assum}, suppose $Y$ admits a balanced metric. Let $\omega_0 \in \P$ be any $\G$-invariant pluriclosed metric on $X$. Consider the $\G$-invariant Aeppli slice $\S := (\omega_0 + \A) \cap \P$ of $\omega_0$. 
By the proof of Theorem \ref{thm tech 1} above, $\S$  contains a unique K\"ahler metric, $\omega_*$.

Next, suppose the real first Chern class vanishes, $c_1(Y) = 0$. For each $\omega \in \S$, regarded as a Hermitian metric on $Y$, we have $[\rho^B_\omega]_{\text{dR}} = 2 \pi  c_1(Y) = 0$. In particular, there exists a real $1$-form $\beta$ on $Y$ such that $\rho^B_\omega = d \beta$. If $S: \Omega^\bullet(Y) \rightarrow \Omega^\bullet(X)^\G$ denotes the symmetrisation operator defined in Proposition \ref{prop sym operator}, then $$d S \beta = S d \beta = S \rho^B_\omega = \rho^B_\omega,$$
where the last equality holds because $\rho^B_\omega$ is already $\G$-invariant. Thus, after replacing $\beta$ with $S \beta$, we may assume $\beta \in \Omega^1(X)^\G$. In particular, $$(\rho^B_\omega)^{1,1} = (d \beta)^{1,1}  \in \A.$$
Therefore, the pluriclosed flow restricts to an ODE on $\S$.

Now, let $\X: \S \rightarrow \A$ denote the smooth vector field corresponding to the pluriclosed flow on $\S$, i.e.,
$$\X_\omega = - (\rho^B_\omega)^{1,1}.$$
Choose $\omega_*$ as a background metric defining $\det:\S \rightarrow \R$, so that $\omega^n = \det(\omega) \, \omega_*^n$. Let $\L:\S \rightarrow \R$ be the smooth function 
$$\L(\omega) := \log \det  (\omega).$$
\begin{lemma}
    \label{lem ode}
    The K\"ahler metric $\omega_* \in \S$ is an equilibrium of the pluriclosed flow, i.e., $$\X_{\omega_*} = 0.$$ 
    The function $\L = \log \det|_\S$ is a proper strict Lyapunov function for the pluriclosed flow $\X$ at the equilibrium $\omega_*$. In other words:
    \begin{enumerate}
        \item  \label{ode3}$\L: \S \rightarrow \R$ attains a unique global maximum at $\omega_*$.
        \item  \label{ode2}$d \L_\omega(\X_\omega) \geq 0$ for any $\omega \in \S$, and $d \L_\omega(\X_\omega) = 0$ if and only if $\omega = \omega_*$.
        \item  \label{ode4}For every $c \in \R$, the superlevel set $\{\omega \in \S : \L(\omega) \geq c\}$ is compact.
    \end{enumerate}
\end{lemma}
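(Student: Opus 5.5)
The plan has three parts: identify $\omega_*$ as an equilibrium, get (1) and (3) from the compactness argument of Lemma \ref{lem_cone}, and reduce (2) to a Lee form identity. The main obstacle is (2).

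\emph{Equilibrium.} First I will show that the Ricci form $\rho_{\omega_*}$ of the K\"ahler metric $\omega_*$ vanishes. Since $\omega_*$ is K\"ahler, $\rho^B_{\omega_*}=\rho_{\omega_*}$. By the discussion preceding the lemma, $\rho_{\omega_*}=d\beta$ for some $\beta\in\Omega^1_\R(X)^\G$. Hence $\rho_{\omega_*}\in\A$.

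Since $\omega_*$ is balanced, Proposition \ref{prop_balanced_characterisation} gives $\int_Y \rho_{\omega_*}\wedge\omega_*^{n-1}=0$. The scalar curvature of $\omega_*$ is constant by homogeneity, and its integral is this quantity, so it vanishes. Thus $\rho_{\omega_*}$ is primitive, and
$$\int_Y|\rho_{\omega_*}|^2\,\omega_*^n = c_n\int_Y \rho_{\omega_*}\wedge\rho_{\omega_*}\wedge\omega_*^{n-2}=0$$
by Stokes, because $\rho_{\omega_*}$ is exact and $\omega_*$ is closed. Hence $\rho_{\omega_*}=0$ and $\X_{\omega_*}=0$.

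\emph{Items (1) and (3).} These come from the proof of Lemma \ref{lem_cone} together with boundedness of $\S$ from the proof of Theorem \ref{thm tech 1}. The maximiser of $\det$ on $\overline\S$ lies in $\S$ and is the unique critical point, which is $\omega_*$; so $\L$ attains its unique global maximum there.

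For compactness, the set $\{\L\ge c\}=\{\omega\in\overline\S:\det\omega\ge e^c\}$ is closed in the compact set $\overline\S$. It avoids $\overline\S\setminus\S$, where $\det=0$. Hence it is a compact subset of $\S$.

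\emph{Item (2).} I will compute
$$d\L_\omega(\X_\omega)=\tr(A_\omega^{-1}\X_\omega)=-\frac{n}{n!\,\V(\omega)}\int_Y \rho^B_\omega\wedge\omega^{n-1}.$$
Next I use the standard identity $\rho^B_\omega=\rho^{C}_\omega-dJ\theta_\omega$, with $\theta_\omega$ the Lee form; the constants and signs still have to be checked. By homogeneity $\omega^n=\det(\omega)\,\omega_*^n$ with $\det(\omega)$ constant on $X$, so the Chern--Ricci form satisfies $\rho^C_\omega=\rho^C_{\omega_*}-\i\partial\overline\partial\log\det(\omega)=\rho_{\omega_*}=0$. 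Therefore
$$d\L_\omega(\X_\omega)=c\int_Y dJ\theta_\omega\wedge\omega^{n-1}=c\int_Y J\theta_\omega\wedge d\omega^{n-1}.$$
Using $d\omega^{n-1}=\theta_\omega\wedge\omega^{n-1}$, this becomes $c'\int_Y|\theta_\omega|^2\,\omega^n$ with $c'>0$ once the signs are tracked.

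This is therefore nonnegative, and it vanishes iff $\theta_\omega=0$, i.e.\ iff $\omega$ is balanced. By the uniqueness in Theorem \ref{thm tech 1}, that happens iff $\omega=\omega_*$. The hardest step is pinning down the correct sign and normalisation in $\rho^B=\rho^C-dJ\theta$ so that the final integrand is a positive multiple of $|\theta|^2$.
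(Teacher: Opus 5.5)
Your proposal is correct. Items (1) and (3) are argued as in the paper, but your treatment of the equilibrium and of item (2) takes a genuinely different route.

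\emph{Equilibrium.} The paper argues dynamically: the flow preserves the K\"ahler condition and stays in $\S$, whose only K\"ahler metric is $\omega_*$, so the flow from $\omega_*$ is stationary. It then upgrades Ricci-flat to flat via Alekseevskii--Kimelfeld, and that flatness is fed into \cite[Lemma 6.1]{streets} for (2). Your argument is static and uses no properties of the flow: $\rho_{\omega_*}\in\A$, so balancedness kills the total scalar curvature; homogeneity makes $\rho_{\omega_*}$ primitive; and Hodge--Riemann plus exactness force $\rho_{\omega_*}=0$.

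\emph{Item (2).} The paper quotes the pointwise evolution $\partial_t\log\det=|T|^2$, with the Laplacian term dropping out by homogeneity, and gets equality iff $\omega$ is K\"ahler. You integrate instead, obtaining a multiple of $\int_Y|\theta_\omega|^2\omega^n$ with equality iff $\omega$ is balanced. You then close off the equality case using the uniqueness in Theorem \ref{thm tech 1} directly. Your version is more self-contained: only the identity relating $\rho^B$ and $\rho^C$ is imported, and only Ricci-flatness of $\omega_*$ is needed.

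The price is the sign bookkeeping you deferred, and that is exactly where the inequality lives. It does come out right. One has $\rho^B=\rho^C-dd^*\omega$, and with $J$ acting by $\i$ on $(1,0)$-forms, $d^*\omega=J\theta$. A quick check is the standard Hopf surface, which is Bismut-flat: there $\theta=-d\log|z|^2$ and $dJ\theta=2\i\partial\overline\partial\log|z|^2=\rho^C$. Moreover $J\theta\wedge\theta=2\i\,\theta^{1,0}\wedge\overline{\theta^{1,0}}\ge0$. Hence
$$d\L_\omega(\X_\omega)=\frac{n}{n!\,\V(\omega)}\int_Y J\theta_\omega\wedge\theta_\omega\wedge\omega^{n-1}\ge0,$$
with equality iff $\theta_\omega=0$. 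Any extra $d\theta$ term in the identity would be harmless, since $\int_Y d\theta\wedge\omega^{n-1}=\int_Y\theta\wedge\theta\wedge\omega^{n-1}=0$.
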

\begin{proof}
    Since the pluriclosed flow preserves the K\"ahler condition and $\omega_*$ is the unique K\"ahler metric in $\S$, the flow starting at $\omega_*$ is constant. In particular, $\omega_*$ is  Ricci-flat. Since $\omega_*$ is also homogeneous, the Alekseevskii--Kimelfeld theorem \cite[Theorem 7.61]{besse} implies that $\omega_*$ is flat.

    The proof of Theorem \ref{thm tech 1} implies \eqref{ode3}.   Next, let $\omega:I \rightarrow \S$ be an integral curve of $\X$. We find 
    $$d\L_{\omega(t)}(\X_{\omega(t)}) = \frac{d}{dt} \L(\omega(t)) = \frac{d}{dt} \log \det(\omega(t)) = |T_{\omega(t)}|^2_{\omega(t)} \geq 0.$$
    The last equality follows from \cite[Lemma 6.1]{streets} and the fact that $\omega_*$ is flat. (Note that the Laplacian term in \cite[Lemma 6.1]{streets} vanishes  because $\log \det (\omega)$ is spatially constant on $X$, thanks to  homogeneity.) Here $T_{\omega(t)}$ is the torsion of the Chern connection of $\omega(t)$.  Property \eqref{ode2} follows because  $T_{\omega(t)} = 0$ if and only if $\omega(t)$ is K\"ahler, and $\omega_*$ is the only K\"ahler metric in $\S$.

    The proof of Theorem \ref{thm tech 1} implies that $\S$ is bounded in the finite-dimensional vector space $\H$.
    Property \eqref{ode4} follows because $$\left\{\omega \in \S : \L(\omega) \geq c \right\} = \left\{\omega \in \overline\S : \det(\omega) \geq e^c \right\}$$ is closed and bounded in $\H$.
\end{proof}

Since $\L:\S \rightarrow \R$ is a proper strict Lyapunov function at $\omega_*$, standard ODE theory implies that  every trajectory of the pluriclosed flow in $\S$ is immortal and converges to $\omega_*$. Moreover, $\omega_*$ is globally asymptotically stable. This completes the proof of Theorem \ref{thm tech 2}.

\appendix
\section{The symmetrisation operator}
\label{sec A}
In this appendix, let $\Omega^k(X)$ and $\Omega^k(Y)$ denote the real $k$-forms on $X$ and $Y$, respectively.
Using the holomorphic covering map $X \rightarrow \Gamma \backslash X = Y$, we identify the $\G$-invariant $k$-forms $\Omega^k(X)^\G$ with their descents to $\Omega^k(Y)$, and we identify the $k$-forms $\Omega^k(Y)$ with their lifts to $\Omega^k(X)$, which are precisely the $\Gamma$-invariant forms in  $\Omega^k(X)$.

\begin{proposition}
    \label{prop sym operator}
    Under Assumptions \ref{assum}, for every $k = 0,\ldots, N := 2n$, there exists a unique linear map $S: \Omega^k(Y) \rightarrow \Omega^k(X)^\G$ such that for every $\alpha \in \Omega^k(Y)$, we have
    \begin{align}
        \label{def S}
        \int_Y \alpha \wedge \beta = \int_Y S \alpha \wedge \beta \qquad \text{for all $\beta \in \Omega^{N-k}(X)^\G$}.
    \end{align}
    Moreover, the map $S: \Omega^{\bullet}(Y) \rightarrow \Omega^{\bullet}(X)^\G$ satisfies the following properties:
    \begin{enumerate}[(i)]
        \item \label{sym0} If $\alpha$ is already $\G$-invariant, then $S \alpha = \alpha$.
        \item \label{sym1}$S$ commutes with the exterior derivative $d$.
        \item \label{sym2}There exists a bi-invariant volume form $\nu$ on $\G$ such that for every $\alpha \in \Omega^k(Y)$ and $x \in X$, we have 
        \begin{align}
            \label{sym integral}
            (S\alpha)_x = \int_{\Gamma \backslash G} (L_g^* \alpha)_x\;  \nu(\Gamma g),
        \end{align}
        where $L_g:X \rightarrow X$ is the map $x \mapsto g \cdot x$.
        \item \label{sym3}The complexification of $S$ sends $(p,q)$-forms to $(p,q)$-forms.
        \item \label{sym4}$S$ commutes with the Dolbeault operators $\partial$ and $\overline \partial$.
        \item \label{sym5}$S$ sends positive $(p,p)$-forms to positive $(p,p)$-forms.
    \end{enumerate}
\end{proposition}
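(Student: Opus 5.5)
I will first establish that $\G$ is unimodular and that $\Gamma\backslash\G$ carries a finite invariant measure. Since $\Gamma$ acts freely and cocompactly on $X = \G/\K$ with $\K$ compact, $\Gamma$ is a discrete subgroup with $\Gamma\backslash\G$ compact; indeed $\Gamma\backslash\G \to \Gamma\backslash X = Y$ is a fibre bundle with compact fibre $\K$. A group admitting a cocompact lattice is unimodular. So there is a bi-invariant Haar volume form $\nu$ on $\G$, normalised so that the induced right-$\G$-invariant measure on $\Gamma\backslash\G$ has total mass $1$.

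Next I will define $S$ directly by the averaging formula \eqref{sym integral}. For $\alpha\in\Omega^k(Y)$, lifted to a $\Gamma$-invariant form on $X$, the integrand $(L_g^*\alpha)_x$ depends only on $\Gamma g$, because $L_{\gamma g}^*\alpha = L_g^* L_\gamma^*\alpha = L_g^*\alpha$. It is a smooth function of $g$ with values in the finite-dimensional space $\Lambda^k T_x^*X$, so the integral over the compact space $\Gamma\backslash\G$ is well defined and smooth in $x$.

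$\G$-invariance comes from right invariance of the measure: $L_h^*S\alpha = \int (L_{gh}^*\alpha)\,\nu$, and the substitution $g\mapsto gh$ gives back $S\alpha$. Property \eqref{sym0} is immediate, since the integrand is then constant and the mass is $1$. Because each $L_g$ is a biholomorphism, $L_g^*$ commutes with $d$, $\partial$, $\overline\partial$, preserves $(p,q)$-type and preserves positivity. Differentiating under the integral and using convexity of the positive cone then give \eqref{sym1}, \eqref{sym3}, \eqref{sym4} and \eqref{sym5}.

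The main step is the identity \eqref{def S}. For $\beta$ $\G$-invariant, I will show that $\int_Y L_g^*\alpha\wedge\beta$ equals $\int_Y\alpha\wedge\beta$ for each $g$. This is not obvious, since $L_g$ does not descend to $Y$. To handle it, I will write $\int_Y\phi$ for a $\Gamma$-invariant top form $\phi$ as $\int_X \chi\,\phi$, where $\chi$ is a smooth compactly supported cutoff with $\sum_{\gamma}\chi\circ\gamma = 1$. Applying this to $\phi = L_g^*(\alpha\wedge\beta) = L_g^*\alpha\wedge\beta$ and changing variables gives $\int_X(\chi\circ L_g^{-1})\,\alpha\wedge\beta$. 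Here $\chi\circ L_g^{-1}$ is again a cutoff, but for the conjugate group $g\Gamma g^{-1}$, so this approach does not close the argument.

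The route I will actually take is to integrate over $g$ first, which removes this problem. Pull everything back to $\Gamma\backslash\G$ via the bundle $\pi:\Gamma\backslash\G\to Y$ and use Fubini on $\Gamma\backslash\G\times\Gamma\backslash\G$. The map $(\Gamma g,\Gamma h)\mapsto$ the value at $h\cdot o$ of $L_g^*\alpha$ lets me rewrite $\int_Y S\alpha\wedge\beta$ as an integral over $\Gamma\backslash\G$ of a function of $hg$-type. Right invariance of $\nu$ together with bi-invariance (unimodularity) converts this to $\int_Y\alpha\wedge\beta$. Concretely, evaluate in the trivialisation of $\Lambda^{top}$ at $o$ given by the $\G$-invariant form $\beta$ and the invariant volume: $\alpha\wedge\beta = f\,\mathrm{vol}$ with $f$ $\Gamma$-invariant. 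Then $S\alpha\wedge\beta = (\int f(g\cdot)\,\nu)\mathrm{vol}$, which is constant, and $\int_Y f\,\mathrm{vol} = c\int_{\Gamma\backslash\G} f(h\cdot o)\,dh$. The averaging identity then reduces to $\int\!\int f(gh\cdot o)\,dg\,dh = \int f(h\cdot o)\,dh$, which follows from unimodularity.

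Uniqueness of $S$ follows from nondegeneracy of the pairing in Lemma \ref{pairing lemma}. I expect this Fubini/unimodularity bookkeeping to be the main obstacle.
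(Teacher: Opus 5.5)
Your proposal is correct and is essentially the paper's argument with the logical order reversed. The paper defines $S$ abstractly via the perfect pairing of Lemma~\ref{pairing lemma}, getting (i) and (ii) from uniqueness and Stokes' theorem. It then checks that the average $S'$ satisfies the defining identity by exactly your base-point computation $\alpha\wedge\beta = f\mu$, $(S'\alpha\wedge\beta)_o = \bigl(\int_Y f\mu\bigr)\mu_o$; you instead define $S$ as the average and use the pairing only for uniqueness.

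The Fubini/double-integral detour is unnecessary, and its joint integrand $f(gh\cdot o)$ is not even a function of the coset $\Gamma h$. Choosing the invariant volume as $\mu := \pi_*\nu$ (fibre integration) gives $\int_{\Gamma\backslash\G} f(g\cdot o)\,\nu = \int_Y f\mu$ directly. This is also exactly the justification your constant $c$ requires.
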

We prove Proposition \ref{prop sym operator} at the end of this appendix. 

\begin{remark}
    When $X$ is a Lie group with an invariant complex structure, the symmetrisation procedure in Proposition \ref{prop sym operator} is well known: see \cite[Proposition 2.2]{kwong2026},
\cite[Section 2]{fino_grant_2004}, \cite[Prop 3.6]{ugarte}, \cite[Theorem 7]{belgun} and  \cite[Propositions 2.1 and 2.3]{giusti_podesta_2023}.
\end{remark}
\begin{corollary}
    \label{cor sym}
    Under Assumptions \ref{assum}, the following are equivalent:
    \begin{enumerate}
        \item \label{cor1}$Y$ admits a K\"ahler/balanced/pluriclosed metric.
        \item \label{cor2}$X$ admits a $\G$-invariant K\"ahler/balanced/pluriclosed metric.
    \end{enumerate}
\end{corollary}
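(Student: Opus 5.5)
The implication from (2) to (1) is immediate. A $\G$-invariant form on $X$ is in particular $\Gamma$-invariant, so it descends along the holomorphic covering $X \rightarrow Y$. The conditions of being positive, closed, balanced ($d\omega^{n-1}=0$) or pluriclosed ($\partial\overline\partial\omega = 0$) are all local, and the covering map is a local biholomorphism. Hence each condition descends. The substance is therefore in (1) $\Rightarrow$ (2), and here I would apply the symmetrisation operator $S$ of Proposition \ref{prop sym operator} to the given metric, or to the appropriate power of it.

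\textbf{Kähler and pluriclosed cases.} Let $\omega$ be a Hermitian metric on $Y$ and set $\tilde\omega := S\omega$.
By \eqref{sym3}, $\tilde\omega$ is a real $(1,1)$-form; it is real because $S$ is real-linear on real forms.
By \eqref{sym5}, $\tilde\omega$ is positive, so $\tilde\omega$ is a $\G$-invariant Hermitian metric on $X$.
If $d\omega=0$, then \eqref{sym1} gives $d\tilde\omega = S\,d\omega = 0$.
If $\partial\overline\partial\omega=0$, then \eqref{sym4} gives $\partial\overline\partial \tilde\omega = S\partial\overline\partial\omega = 0$.

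\textbf{Balanced case.} This case is not linear in $\omega$, because $S(\omega)^{n-1}\neq S(\omega^{n-1})$ in general. So I would symmetrise the $(n-1,n-1)$-form instead. Put $\Omega := S(\omega^{n-1})$. By \eqref{sym3} and \eqref{sym5}, $\Omega$ is a $\G$-invariant positive $(n-1,n-1)$-form, and by \eqref{sym1} it satisfies $d\Omega = S(d\omega^{n-1}) = 0$. Next I would invoke Michelsohn's pointwise linear-algebra fact: every strictly positive $(n-1,n-1)$-form equals $\tilde\omega^{n-1}$ for a unique Hermitian form $\tilde\omega$. Uniqueness forces $\tilde\omega$ to be $\G$-invariant, since pulling back by $L_g$ preserves $\Omega$ and commutes with taking $(n-1)$st roots. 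Therefore $\tilde\omega$ is a $\G$-invariant balanced metric.

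\textbf{Main obstacle.} The delicate point is positivity in \eqref{sym5}: one needs \emph{strict} positivity, so that $\Omega$ is strictly positive and the root $\tilde\omega$ is nondegenerate. I would verify this via the integral formula \eqref{sym integral}. It exhibits $(S\alpha)_x$ as an average, with respect to a positive measure, of the forms $(L_g^*\alpha)_x$. Each $L_g$ is a biholomorphism, so each $(L_g^*\alpha)_x$ is strictly positive when $\alpha$ is. The set of strictly positive forms is a convex cone, so the average is strictly positive. I would take \eqref{sym5} as stated with this interpretation, and check positivity in the strong (transverse) sense that Michelsohn's lemma requires.
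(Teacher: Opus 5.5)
Your proposal is correct and matches the paper's proof. In the K\"ahler and pluriclosed cases you symmetrise $\omega$ itself; in the balanced case you symmetrise $\omega^{n-1}$, take Michelsohn's unique $(n-1)$-st root, and use uniqueness of that root to get $\G$-invariance. Your extra discussion of strict positivity is the same averaging argument, via the integral formula \eqref{sym integral}, that the paper uses to prove \eqref{sym5}.
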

\begin{proof}[Proof of Corollary \ref{cor sym}, assuming Proposition \ref{prop sym operator}]
    We show that \eqref{cor1} implies \eqref{cor2}. The other direction is trivial.
    Let $ \omega \in \Omega^{1,1}_\R(Y)$ be any Hermitian metric on $Y$. Property \eqref{sym5} implies that $S \omega$ is a $\G$-invariant Hermitian metric on $X$. If $\omega$ is K\"ahler/pluriclosed, then \eqref{sym1} and \eqref{sym4} imply that $S \omega$ is also K\"ahler/pluriclosed, respectively.

    Finally, suppose $\omega$ is balanced, and set $\Omega := \omega^{n-1}$, which is a positive $(n-1,n-1)$-form satisfying $d \Omega = 0$. Then \eqref{sym5} implies that $S \Omega$ is a $\G$-invariant positive $(n-1,n-1)$-form on $X$, and \eqref{sym1} implies that $d S \Omega = S d \Omega = 0$. By \cite[Section 4]{michelsohn_1982},  there exists a unique Hermitian metric $\omega_1$ on $X$ such that $\omega_1^{n-1} = S \Omega$, i.e., $\omega_1$ is balanced. Moreover, 
    $$(L_g^* \omega_1)^{n-1} = L_g^* \omega_1^{n-1} = L_g^* S \Omega = S \Omega,$$
    so uniqueness of the root $\omega_1$ implies that $\omega_1$ is $\G$-invariant.
\end{proof}

\begin{lemma}
    \label{pairing lemma}
    Under Assumptions \ref{assum}, the following pairing is perfect:
    \begin{align}
        \label{pairing eq}
        \Omega^k(X)^\G \times \Omega^{N-k}(X)^\G \longrightarrow \R, \qquad (\gamma, \beta) \longmapsto \int_Y \gamma \wedge \beta.
    \end{align}
\end{lemma}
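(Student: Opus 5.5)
The plan is to reduce the pairing to a pointwise statement at the base point $o$, where it becomes the usual perfect wedge pairing on $\Lambda^\bullet T_o^*X$ restricted to $\K$-invariants.

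First, fix a $\G$-invariant volume form $\mu$ on $X$. For instance, take $\mu = \omega_B^n/n!$ for some $\G$-invariant Hermitian metric $\omega_B$; such a metric exists by averaging any Hermitian inner product on $T_oX$ over the compact group $\K$ and translating by $\G$. Since $\G$ is connected, it preserves the orientation, so $\mu$ descends to a volume form on $Y$ with total volume $c := \int_Y \mu > 0$. For $\gamma \in \Omega^k(X)^\G$ and $\beta \in \Omega^{N-k}(X)^\G$, the top form $\gamma \wedge \beta$ is $\G$-invariant, so $\gamma\wedge\beta = f\mu$ with $f$ a $\G$-invariant function. Transitivity makes $f$ constant, equal to $f(o)$. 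Hence
$$\int_Y \gamma \wedge \beta = c\, \frac{(\gamma_o \wedge \beta_o)}{\mu_o}.$$
So the pairing is, up to the nonzero constant $c$, the algebraic wedge pairing evaluated at $o$.

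Second, evaluation at $o$ identifies $\Omega^k(X)^\G$ with $(\Lambda^k T_o^*X)^\K$. The map is injective by transitivity. It is surjective because a $\K$-invariant covector $\xi$ extends unambiguously by $(g\cdot\xi)$ at $g\cdot o$, and smoothness follows from the local sections of $\G \to \G/\K \cong X$. This reduces the problem to showing that the wedge pairing
$$(\Lambda^k W^*)^\K \times (\Lambda^{N-k}W^*)^\K \to \Lambda^N W^* \cong \R, \qquad W = T_oX,$$
is perfect. Both spaces are finite-dimensional, so it suffices to show that the map $\gamma \mapsto \langle \gamma, \cdot\rangle$ is injective.

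The key step is the following. Average an inner product on $W$ over $\K$ to get a $\K$-invariant one. The isotropy representation is complex-linear, so it is orientation preserving, and hence $\K$ fixes $\mu_o$. Consequently the Hodge star $\star$ on $\Lambda^\bullet W^*$ is $\K$-equivariant and maps $(\Lambda^k W^*)^\K$ into $(\Lambda^{N-k}W^*)^\K$. For $\gamma \neq 0$ $\K$-invariant, take $\beta = \star\gamma$. This $\beta$ is $\K$-invariant, and $\gamma\wedge\star\gamma = |\gamma|^2\mu_o \neq 0$, which gives nondegeneracy in the first slot. The second slot is symmetric, or follows from the equality of dimensions via $\star$.

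The only point that needs care is that $\K$ preserves both the metric and the orientation, so that $\star$ preserves invariants. This is exactly where compactness of the isotropy and holomorphicity of the action are used.
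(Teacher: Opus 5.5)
Your proposal is correct and essentially matches the paper's proof: both use the Hodge star of an invariant metric, which preserves invariant forms because the action is isometric and orientation-preserving, to get $\gamma\wedge\star\gamma = |\gamma|^2\,\mathrm{vol} \neq 0$, and then conclude from the equality of the finite dimensions. The only difference is that you first reduce to linear algebra on $\K$-invariants in $\Lambda^\bullet T_o^*X$, whereas the paper applies the global Hodge star of a $\G$-invariant metric directly and integrates $|\alpha|_g^2\,\mu_g$ over $Y$.
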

\begin{proof}[Proof of Lemma \ref{pairing lemma}]
    Since the isotropy subgroups of the action of $\G$ on $X$ are compact, there exists a $\G$-invariant Riemannian metric $g$ on $X$ by a standard averaging procedure. Let $*_g$ denote the Hodge star operator of $g$, which sends $\G$-invariant forms to $\G$-invariant forms. In particular, $\Omega^k(X)^\G$ and $\Omega^{N-k}(X)^\G$ have the same finite dimension. Let $\mu_g$ denote the Riemannian volume form of $g$. Fix $\alpha \in \Omega^k(X)^\G$, and suppose $\alpha \neq 0$. Then
    $$\int_Y \alpha \wedge (*_g \alpha)  = \int_Y |\alpha |_g^2 \, \mu_g >  0.$$
    Therefore, the induced linear map $\Omega^k(X)^\G  \rightarrow \left(\Omega^{N-k}(X)^\G \right)^*$ is injective. Because $\Omega^k(X)^\G$ and $\Omega^{N-k}(X)^\G$ have the same finite dimension, it follows that the pairing is perfect.
\end{proof}
\begin{proof}[Proof of Proposition \ref{prop sym operator}]
    Throughout the proof, $\alpha$ denotes a $k$-form on $Y$.
    
    First, let us show existence and uniqueness.  Observe that $\alpha$ defines a linear functional $F_\alpha:\Omega^{N-k}(X)^\G \rightarrow \R$ by $\beta \mapsto  \int_Y \alpha \wedge \beta$. Since the pairing \eqref{pairing eq} is perfect,
     there exists a unique element $S\alpha \in \Omega^k(X)^\G$ which induces $F_\alpha$, i.e., for every $\beta \in\Omega^{N-k}(X)^\G$, we have
    $$\int_Y S \alpha \wedge \beta = F_{\alpha}(\beta) =\int_Y \alpha \wedge \beta. $$
    
    Property  \eqref{sym0} follows from the definition of $S$. To show \eqref{sym1}, suppose $k \leq N -1$ and observe that for $\beta \in \Omega^{N-k -1}(X)^\G$, we have
    $$\int_Y (d S \alpha) \wedge  \beta = (-1)^{k+1} \int_Y S\alpha \wedge d \beta = (-1)^{k+1} \int_Y \alpha \wedge d \beta = \int_Y d \alpha \wedge \beta = \int_Y (S d \alpha) \wedge \beta.$$
    The second and fourth equalities follow from the definition of $S$. The first and third equalities follow from Stokes' theorem. 

    Let us prove \eqref{sym2}. Let us begin by showing that $\G$ admits a bi-invariant volume form. To this end, fix a base point $o \in X$, and let $\K$ be the isotropy subgroup of $\G$ at $o$, which is compact by assumption. Since $\Gamma$ acts freely on $X = \G/ \K$ from the left, it follows that $\K$ acts freely on $\Gamma \backslash \G$ from the right. Thus, $\Gamma \backslash \G \rightarrow \Gamma \backslash \G / \K = Y$ is a principal $\K$-bundle with compact base and fibres. Therefore, the total space $\Gamma \backslash \G$ is also compact. In particular, $\Gamma$ is a cocompact lattice of $\G$, so $\G$ is unimodular \cite[Lemma 6.2]{milnor}. Since $\G$ is connected and unimodular, it admits a bi-invariant volume form $\nu$.

    Next, set $\mu := \pi_* \nu$, where $\pi_*: \Omega^{\dim_\R \G}(\G) \rightarrow  \Omega^N(X)$ denotes integration along the fibres of the principal $\K$-bundle $\pi: \G \rightarrow \G / \K = X$. Since $\nu$ is left-invariant, the $\G$-equivariance of $\pi: \G \rightarrow X$ implies that $\mu$ is a $\G$-invariant volume form on $X$. We identify $\nu$ and $\mu$ with their descents to $\Gamma \backslash G$ and $Y$, respectively. Rescale $\nu$, and hence $\mu := \pi_* \nu$, so that 
    \begin{align}
        \label{int equal 1}
        \int_Y \mu  =1.
    \end{align}
    
    Let $(S' \alpha)_x$ denote the integral on the right-hand side of \eqref{sym integral}. We wish to show that $S' \alpha = S\alpha$. First, note that the definition of $(S' \alpha)_x$ makes sense because the integrand $\G \rightarrow \Lambda^k T_x^* X$,  $g \mapsto (L_g^* \alpha)_x$ is smooth and $\Gamma$-invariant. Indeed, if $\gamma \in \Gamma$, then $L_{\gamma g}^* \alpha = L_g^* L_\gamma^* \alpha = L_g^* \alpha$, since $\alpha$ is $\Gamma$-invariant. Next, let us show that $S' \alpha \in \Omega^k(X)^\G$; it suffices to show $S' \alpha$ is left-invariant. Indeed, for $h \in \G$ and $x \in X$, we find 
    $$(L^*_h S' \alpha)_x = \int_{\Gamma \backslash \G} (L_h^* L_g^* \alpha)_x \,\nu =\int_{\Gamma \backslash \G} (L_{gh}^* \alpha)_x \,\nu = \int_{\Gamma \backslash \G} (L_{g}^* \alpha)_x \,\nu = (S' \alpha)_x,$$
    where we used the right-invariance of $\nu$ in the second-last equality.

    Now, fix $\beta \in \Omega^{N-k}(X)^\G$. Since $\mu$ is a $\G$-invariant volume form on $X$, there exists a $\Gamma$-invariant smooth function $f:X \rightarrow \R$ such that 
    $\alpha \wedge \beta = f \mu.$ At the base point $o \in X$, we have
    \begin{align*}
        (S' \alpha \wedge \beta)_o &= \int_{\Gamma \backslash \G} (L_g^* \alpha \wedge \beta)_o \, \nu =  \int_{\Gamma \backslash \G}L_g^* (\alpha \wedge \beta)_o \, \nu=  \int_{\Gamma \backslash \G}L_g^* (f \mu)_o \, \nu  \\&=  \left( \int_{\Gamma \backslash \G} (f \circ \pi) \ \nu\right) \mu_o = \left( \int_Y f \, \mu\right) \mu_o = \left( \int_Y \alpha \wedge \beta\right) \mu_o,
    \end{align*}
    where the second-last equality follows because $\mu = \pi_* \nu$. Since $S' \alpha \wedge \beta$ and $\left( \int \alpha \wedge \beta\right) \mu$ are both $\G$-invariant and agree at $o$, they are equal. Therefore, 
    $$\int_Y S' \alpha \wedge \beta = \left( \int_Y \alpha \wedge \beta\right)\int_Y \mu  = \int_Y \alpha \wedge \beta,$$
    where the last equality follows from \eqref{int equal 1}. The uniqueness property of $S$ implies that $S\alpha = S'\alpha$, as desired.

    To see \eqref{sym3}, suppose $\alpha$ is a $(p,q)$-form on $Y$, lifted to $X$. Since each $L_g:X \rightarrow X$ is a biholomorphism, it follows that each $L_g^* \alpha$ is a $(p,q)$-form on $X$. The integral formula \eqref{sym2} implies that $S \alpha$ is also a $(p,q)$-form on $X$. Property \eqref{sym4} follows immediately from \eqref{sym1} and \eqref{sym3}. Finally, for \eqref{sym5}, suppose $\alpha$ is a positive $(p,p)$-form on $Y$. Again, since each $L_g$ is a biholomorphism, it follows that $L_g^* \alpha$ is a positive $(p,p)$-form on $X$, and the integral formula \eqref{sym2} implies that $S \alpha$ is a positive $(p,p)$-form.
\end{proof}

\bibliographystyle{amsalpha} 
\bibliography{references}

\end{document}